\newtheorem{theorem}{Théorème}
\newtheorem{proposition}{Proposition}
\newtheorem{corollary}{Corollaire}
\newtheorem{lemma}{Lemme}
\newtheorem*{property}{Propriétés}
\newtheorem{remark}{Remarque}
\theoremstyle{definition}
\newtheorem*{definition}{Définition}
\newtheorem*{consequence}{Conséquence}
\newcommand{\dis}{\displaystyle}
\newcommand{\xdownarrow}[1]
{%
{\left\downarrow\vbox to #1{}\right.\kern-\nulldelimiterspace}
}
\newcommand{\xuparrow}[1]
{%
{\left\uparrow\vbox to #1{}\right.\kern-\nulldelimiterspace}
}
\author{Hajer HMILI}
\author{Isabelle LIOUSSE}
\begin{document}

\address{ {\bf Hajer HMILI},  Institut supérieure des sciences appliquées et de technologie de Mateur, 7030 Mateur, Bizerte, Tunisie.
\emph {hajermido@yahoo.fr }}

\address{{\bf Isabelle LIOUSSE}, Laboratoire Paul Painlevé, Université de Lille, 59655 Villeneuve d'Ascq Cédex, France.  \emph {liousse@math.univ-lille1.fr}}

\title[Groupes de Brown-Thompson]{Nombre de classes de conjugaison d'éléments d'ordre fini dans les groupes de Brown-Thompson}

\begin{abstract} 

Nous étendons un résultat de Matucci (\cite{Mat}) sur le nombre de classes de conjugaison d'éléments d'ordre fini dans le groupe de Thompson $T$.  D'après \cite{Lio},  le groupe de  Brown-Thompson  $T_{r,m}$ ne contient pas  d'élément d'ordre $q$ lorsque $pgcd(m-1,q)$ ne divise pas $r$. Nous montrons que si  $pgcd(m-1,q)$  divise $r$ alors il y a exactement $\varphi(q). pgcd(m-1,q)$  classes de conjugaison d'éléments d'ordre $q$ dans $T_{r,m}$, où $\varphi$ est la fonction phi  d'Euler. Comme corollaire, nous obtenons que  le groupe de Thompson $T$ n'est isomorphe à  aucun des groupes $T_{r,m}$ avec $m\not=2$ et tout morphisme de $T$ dans $T_{r,m}$, avec $m\not=2$ et $r\not= 0$ $mod \ (m-1)$, est trivial.

\medskip

\noindent A{\textsc{bstract.}} We extend a result of Matucci (\cite{Mat}) on the number of conjugacy classes of finite order elements  in the Thompson group $T$. According to  \cite{Lio}, if $ gcd(m-1,q)$ is not a divisor of $r$ then there does not exist element of order $q$ in the Brown-Thompson group $T_{r,m}$. We show that if $ gcd(m-1,q)$ is a divisor of $r$ then there are exactly $\varphi(q). gcd(m-1,q)$ conjugacy classes of elements of order $q$ in $T_{r,m}$, where $\varphi$ is the Euler function phi.  As a corollary, we obtain  that the Thompson group $T$  is isomorphic to none of the groups $T_{r,m}$, for $m\not=2$ and any morphism from $T$ into  $T_{r,m}$, with $m\not=2$ and $r\not= 0$ $mod \ (m-1)$, is trivial.    \end{abstract}

\subjclass{20E45, 37E10, 37E15}
\keywords{Thompson's groups, Brown's groups, torsion elements, conjugacy classes, PL-homeomorphisms of the circle, isomorphisms}
\maketitle

\section{Introduction et définitions}

En 1965, R. Thompson découvrit  les premiers exemples de groupes $T\subset  V$ de présentation finie,  simples et infinis. Le groupe $T$ [resp. $V$] se représente comme groupe d'homéomorphismes [resp. échanges d'intervalles] affines par morceaux du cercle (voir \cite{CFP}, \cite{Ste}). En 1987, K. Brown (\cite{Bro}) a défini une famille  $T_{r,m} \subset V_{r,m}$ englobant $T$ et $V$  et les groupes $V_{r,m}$ sont isomorphes aux groupes $G_{r,m}$ de Higman (\cite{Hig}).

Plus précisément, soit  $r$ un entier strictement positif, on note $\mathbb S_r$ le cercle ${\mathbb R }/{r\mathbb Z}$ de longueur $r$. Le cercle de longueur $1$ est $\mathbb S_1$, nous le noterons plus classiquement $\mathbb S^1$.

\begin{definition}
Un homéomorphisme $f$ du  cercle $\mathbb S_r$ est 
{\em affine par morceaux} s'il existe une subdivision finie 
$0<a_1<a_2<\dots<a_p=r$ de l'intervalle $[0,r]$ et un relevé ${\tilde
  f}$ de $f$ à $\mathbb R$ tels que $\displaystyle {\tilde f}_{\vert [a_i,a_{i+1}] } (x) 
  = \lambda_i x + \beta_i, \quad \lambda_i, \beta_i\in \mathbb R.$

Les  points $a_i$ sont appelés  \emph{points de coupure} de~$f$ et les nombres  $\lambda_i$, {\em pentes de~$f$}. 

Le groupe des homéomorphismes affines par morceaux de $\mathbb S_r$ préservant l'orientation est 
noté $PL^+ (\mathbb S_r)$.
\end{definition} 
 
\begin{definition}Soient $r$ et $m\geq 2$ deux entiers strictement positifs.  On  définit le \textit {groupe de Brown-Thompson} $T_{ r,m}$  comme l'ensemble des éléments $f$ de $PL^+ (\mathbb S_r)$  tels que : 
\begin{itemize}
\item les pentes de $f$  appartiennent à $\langle m \rangle  =\{m^s, s\in \mathbb Z\}$.
\item les points de coupure  de $f$ appartiennent à $\mathbb Z [\frac{1}{m}]=\{N. m ^s \vert N, s \in \mathbb Z\}$, 
\item les images par $f$ des points de coupure  de $f$ appartiennent à  $\mathbb Z [\frac{1}{m}]$.
\end{itemize}
Le  \textit {groupe de  Thompson $T$} est $T_{1,2}$.
\end{definition}

De nombreux auteurs se sont intéressés aux invariants et à la question  d'isomorphicité pour ces groupes de type Thompson (\cite{BiSt}, \cite{Bri}, \cite{Bri2}, \cite{BrGu}, \cite{Bro}, \cite{Hig}, \cite{Lio}, \cite{Mat}, \cite{Ste} \ $\cdots$). Dans cet article, nous nous concentrons sur les obstructions à l'isomorphicité entre groupes $T_{r, m}$ issues des éléments d'ordre fini et de leurs classes de conjugaison. Le calcul du nombre de ces classes  fût effectué pour $G_{r,m}$ par Higman (\cite{Hig}, section 6),  pour $T$ par Matucci(\cite{Mat}) puis ultérieurement par Geoghegan-Varisco(\cite{GeVa}) et Fossas(\cite{Fos}).
Comme dans \cite{Mat} et \cite{GeVa}, nous utilisons la représentation comme groupe d'homéomorphismes affines par morceaux du cercle et disposons ainsi d'un invariant dynamique suplémentaire : le nombre de rotation de Poincaré.  Nous indiquons sa définition et ses premières propriétés (voir \cite{Her} ou \cite{KH}).

\begin{definition} Soit $f$ un homéomorphisme  du  cercle $\mathbb S_r$, on définit le \textit{nombre de rotation sur $\mathbb S_r$} de $f$  par : $\dis  \rho(f) = \lim\limits_{n\rightarrow \infty} ({\tilde
f^n (0)/ rn }  ) \ (mod \ 1)  \ \in \mathbb S^1.$
\end{definition}
Ce nombre ne dépend pas du choix du relevé $\tilde{f}$ et satisfait les  propriétés classiques  : 

\begin{property} \ 
 
\begin{itemize}
\item $\rho (R_{\alpha}) =\frac{\alpha}{r}$ où $R_{\alpha}(x)= x+\alpha \ (mod \  r)$,

\item  $\rho(f^n) = n \, \rho(f)$  pour tout $n\in \mathbb Z$, 

\item  si $f$ est d'ordre fini $q\in \mathbb N^{>1}$ alors $\rho(f) =\frac{p}{q}$ avec $p<q$ et $p\wedge q=1$,

\item  soit  $h:\mathbb S_r \rightarrow \mathbb S_{r'}$  un homéomorphisme préservant l'orientation, $\rho(h\circ f\circ h^{-1}) =\rho (f)$.
\end{itemize}
\end{property}

Commençons par cette observation  : tout élément d'ordre $q$ est conjugué dans $PL^+ (\mathbb S^1)$  à une rotation d'angle $\frac{p}{q}$ avec $p\wedge q = 1$, une conjugante est construite par moyennisation (voir par exemple \cite{KH}, Proposition 11.2.2). Comme deux rotations d'angles différents ne sont jamais $C^0$-conjuguées, le nombre de classes de conjugaison d'éléments d'ordre $q$ dans $PL^+ (\mathbb S^1)$ est exactement le nombre d'entiers $p<q$ premiers avec $q$ c'est à dire $\varphi(q)$ (la fonction phi d'Euler). 
Le Théorème 7.1.5 de  \cite{Mat} (voir aussi \cite{GeVa} et \cite{Fos})  exprime qu'il est encore vrai pour le groupe de Thompson $T$ : \textit{"dans $T$, tout rationnel de $\mathbb S^1$ est réalisé comme nombre de rotation d'une unique classe de conjugaison d'éléments d'ordre fini"}. 

Ici, nous établissons que cette propriété n'est plus satisfaite par les autres groupes $T_{r, m}$ : 
\begin{theorem}\label{thm:1}
Soient $r\geq 1$, $m\geq 2$ et  $q\geq 2$ des entiers.
\begin{enumerate}[A.]
\item Si pgcd $(m-1, q)$ ne divise pas $r$  alors il y a $0$ classes de conjugaison d'éléments d'ordre $q$ dans $T_{r,m}$.
\item Si  pgcd $(m-1, q)$ divise  $r$ alors il y a pgcd $(m-1, q)$ classes de conjugaison d'éléments d'ordre $q$  et de nombre de rotation $\frac{p}{q}$ dans $T_{r,m}$, pour tout entier $p$ premier avec $q$.
\item Si  pgcd $(m-1, q)$ divise  $r$ alors il y a $\varphi(q)$.pgcd$(m-1, q)$ classes de conjugaison d'éléments d'ordre $q$ dans $T_{r,m}$.
\end{enumerate} \end{theorem}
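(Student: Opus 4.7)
The plan is to focus on Part B; Part A is quoted from \cite{Lio}, and Part C follows from B by summing over the $\varphi(q)$ integers $p<q$ coprime to $q$. For Part B, I would attach to each order-$q$ element $f\in T_{r,m}$ of rotation number $p/q$ a conjugacy invariant $\Delta(f)\in\mathbb Z/(m-1)\mathbb Z$. Since $f^k$ has nonzero rotation number $kp/q$ for $1\le k\le q-1$, it has no fixed point, so every point of $\mathbb S_r$ has minimal period exactly $q$. Pick any $x_0\in\mathbb Z [\frac{1}{m}]$; its $f$-orbit consists of $q$ distinct points of $\mathbb Z [\frac{1}{m}]$, cyclically partitioning $\mathbb S_r$ into $q$ arcs $A_0,\dots, A_{q-1}$ whose lengths lie in $\mathbb Z [\frac{1}{m}]$, with $f(A_j)=A_{j+p\bmod q}$. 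Integrating $f'$ over $A_j$ piece by piece and using that every slope of $f$ lies in $\langle m\rangle$ and is therefore $\equiv 1\pmod{m-1}$ in $\mathbb Z[\frac{1}{m}]/(m-1)$, we obtain
\[ |A_{j+p\bmod q}|=|f(A_j)|\equiv |A_j|\pmod{m-1}, \]
so all arc lengths share a common residue modulo $m-1$. Set $\Delta(f):=|A_0|\bmod(m-1)$.

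Next I would verify three basic properties: independence of $x_0$ (comparing two starting points, the change in the lengths of the new arcs is a combination of powers of $m$ times an $m$-adic displacement, hence vanishes mod $m-1$); conjugation invariance (for $h\in T_{r,m}$ with $g=hfh^{-1}$, the $g$-orbit of $h(x_0)$ is $h(\mathrm{orbit\ of\ }x_0)$, and the same slope argument applied to $h$ gives $|h(A_0)|\equiv|A_0|\pmod{m-1}$); and the arithmetic constraint $q\,\Delta(f)\equiv r\pmod{m-1}$ obtained from $\sum_j|A_j|=r$. Combined with the conjugation invariance of the rotation number, this shows that the number of $T_{r,m}$-conjugacy classes of order $q$ and rotation number $p/q$ is at most the number of solutions of $qx\equiv r\pmod{m-1}$, which is exactly $\gcd(m-1,q)$ under the hypothesis $\gcd(m-1,q)\mid r$.

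To attain this upper bound, I would, for each admissible residue $c$, explicitly build a model element $f_c\in T_{r,m}$ of order $q$, rotation number $p/q$ and $\Delta(f_c)=c$, by cutting $\mathbb S_r$ into arcs of $\mathbb Z[\frac{1}{m}]$-lengths all $\equiv c\pmod{m-1}$ summing to $r$ and letting $f_c$ permute them cyclically via affine maps (if needed with extra break points) whose slopes lie in $\langle m\rangle$; the existence of such a partition is an elementary exercise once $\gcd(m-1,q)\mid r$. The central difficulty is completeness: given $f,g\in T_{r,m}$ of order $q$, same rotation number and $\Delta(f)=\Delta(g)$, one must produce $h\in T_{r,m}$ with $hfh^{-1}=g$. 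Extending Matucci's strategy \cite{Mat} for $T=T_{1,2}$, my plan is to refine the $f$- and $g$-orbit partitions into $m$-adic sub-arcs whose orbits can be paired so that matched arcs have equal $\mathbb Z[\frac{1}{m}]$-lengths; the equality $\Delta(f)=\Delta(g)$ is precisely what allows each cycle of pairings to close up. Then $h$ is defined arc by arc, as an affine map of slope $1$ on each matched pair, and the main technical obstacle is to carry out the refinement so that the resulting $h$ has break points in $\mathbb Z[\frac{1}{m}]$, images in $\mathbb Z[\frac{1}{m}]$ and slopes in $\langle m\rangle$, thus belonging to $T_{r,m}$ and equivariantly intertwining $f$ and $g$.
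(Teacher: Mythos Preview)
Your invariant $\Delta(f)=|A_0|\bmod(m-1)$ coincides with the paper's: writing $f(0)=N/m^{s}$ one has $|A_0|\equiv N\pmod{(m-1)\mathbb Z[\tfrac{1}{m}]}$, and the paper's Propositions~3 and~4 pin the conjugacy class to $N\bmod(m-1)$ subject to $qN\equiv r\pmod{m-1}$. The upper bound and the construction of $\gcd(m-1,q)$ distinct classes therefore go through as you outline. The paper's route to completeness is different and shorter: it first reduces to rotation number $1/q$ via the bijection $f\mapsto f^{u}$ on conjugacy classes, then shows (by equivariantly extending the dilation $x\mapsto m^{s}x$ on $[0,f(0)]$) that every such $f$ with $f(0)=N/m^{s}$ is $PL_m$-conjugate to the honest rotation $R_N$ on $\mathbb S_{qN}$; finally, $R_{N_1}$ and $R_{N_2}$ are compared by a single Bieri--Strebel map between $[0,N_1]$ and $[0,N_2]$.

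Your own completeness plan has a gap as written. Any conjugacy $h$ must carry a fundamental arc $[x_0,f(x_0)]$ for $f$ onto a fundamental arc $[y_0,g(y_0)]$ for $g$, and these have lengths that are only congruent modulo $(m-1)\mathbb Z[\tfrac1m]$, not equal; no $f$- and $g$-equivariant refinement can alter the total length of a fundamental domain, so the ``equal lengths, slope~$1$'' scheme cannot close up when $|A_0^{f}|\neq|A_0^{g}|$. What the equality $\Delta(f)=\Delta(g)$ actually buys is not an equal-length matching but the Bieri--Strebel criterion: it guarantees a $PL_m$-homeomorphism $h_0\colon[x_0,f(x_0)]\to[y_0,g(y_0)]$ with slopes in $\langle m\rangle$, which one then extends equivariantly by $h=g^{j}\circ h_0\circ f^{-j}$ on $f^{j}[x_0,f(x_0)]$. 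That is precisely the mechanism behind the paper's lemma, and it makes the Matucci-style refinement unnecessary.
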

Nous en déduisons le
\begin{corollary}\label{coro :1} \ 
Tout rationnel de $\mathbb S^1$ est réalisé comme nombre de rotation d'une unique classe de conjugaison d'éléments d'ordre fini dans $T_{r,m}$ si et seulement si $m=2$.

Le groupe de Thompson $T$ n'est isomorphe à aucun des groupes $T_{r,m}$ avec $m\not=2$ et tout morphisme de $T$ dans $T_{r,m}$, avec $m\not=2$ et $r\not= 0$ $mod \ (m-1)$,  est trivial.


\end{corollary}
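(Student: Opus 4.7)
\medskip

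\noindent\textbf{Plan de démonstration.} La démonstration découle directement du Théorème \ref{thm:1} combiné au résultat de Matucci rappelé plus haut (qui affirme que $T$ contient exactement $\varphi(q)$ classes de conjugaison d'éléments d'ordre $q$, pour tout $q \geq 2$). L'idée clé dans les deux affirmations est de spécialiser à $q = m-1$, valeur qui maximise pgcd$(m-1,q)$ et rend visibles les phénomènes qui distinguent $T_{r,m}$ de $T$.

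Pour la première affirmation, l'implication \emph{si} se traite en remarquant que pour $m=2$ on a pgcd$(m-1,q)=1$ pour tout $q\geq 2$, condition qui satisfait automatiquement l'hypothèse du Théorème \ref{thm:1}.A et qui ramène le comptage fourni par le Théorème \ref{thm:1}.B à une unique classe par nombre de rotation rationnel, conformément au cas classique de $T$. Pour l'implication \emph{seulement si} dans le cas $m \geq 3$, je poserais $q = m-1 \geq 2$ : si $m-1$ divise $r$, alors le Théorème \ref{thm:1}.B fournit $m-1 \geq 2$ classes de conjugaison par nombre de rotation $\frac{p}{m-1}$ (échec de l'unicité); sinon, le Théorème \ref{thm:1}.A entraîne qu'aucun rationnel de dénominateur $m-1$ n'est réalisé (échec de la réalisation). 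Dans les deux cas, l'énoncé d'unicité tombe en défaut.

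Pour la seconde affirmation, un éventuel isomorphisme $T \to T_{r,m}$ doit préserver, pour chaque $q \geq 2$, le nombre de classes de conjugaison d'éléments d'ordre $q$. En spécialisant à $q = m-1$ avec $m \geq 3$, ce nombre vaut $\varphi(m-1)$ dans $T$ mais $\varphi(m-1)\cdot(m-1)$ ou $0$ dans $T_{r,m}$ d'après le Théorème \ref{thm:1}.C, deux valeurs incompatibles avec $\varphi(m-1)$ puisque $\varphi(m-1) \geq 1$ et $m-1 \geq 2$. Pour la trivialité des morphismes $\phi : T \to T_{r,m}$ sous les hypothèses $m \geq 3$ et $r \not\equiv 0 \pmod{m-1}$, j'invoquerais la simplicité de $T$ : le noyau de $\phi$ étant un sous-groupe normal, il est trivial ou égal à $T$ tout entier. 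Si $\phi$ était injectif, son image contiendrait un élément d'ordre $m-1$ (car $T=T_{1,2}$ en possède, par le Théorème \ref{thm:1}.B avec pgcd$(1,m-1)=1 \mid 1$), ce qui est exclu par le Théorème \ref{thm:1}.A puisque pgcd$(m-1,m-1)=m-1$ ne divise pas $r$.

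L'essentiel du travail ayant déjà été accompli pour établir le Théorème \ref{thm:1}, il n'y a pas d'obstacle technique majeur dans ce corollaire; la seule décision de fond est le choix adéquat de $q$ dans chaque énoncé, qui s'impose naturellement via $q = m-1$.
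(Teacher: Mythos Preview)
Your proposal is correct and follows essentially the same approach as the paper: specialize to $q=m-1$, invoke the simplicity of $T$, and compare conjugacy class counts via the Théorème~\ref{thm:1}. The only cosmetic difference is that the paper packages the ``no element of order $m-1$ when $r\not\equiv 0\pmod{m-1}$'' step into an intermediate Proposition (its Proposition~\ref{prop :5}) and phrases the non-isomorphism via the per-rotation-number count (item~B), whereas you appeal directly to Théorème~\ref{thm:1} and use the total count (item~C); your formulation is arguably cleaner since the total number of conjugacy classes of a given order is transparently an abstract isomorphism invariant.
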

Ce corollaire contraste avec le résultat d'ubiquité de $F$ montré par Brin (\cite{Bri2}).
Notre approche diffère de celles de \cite{Mat}, \cite{GeVa} et \cite{Fos} au sens où elle est essentiellement basée sur un critère dû à  Bieri et Strebel \cite{BiSt}.

\section{Préliminaires}\label{Pre}

\subsection{Critère de Bieri-Strebel pour les groupes de Brown-Thompson.}

Nous reprennons ici le critère général de Bieri-Strebel déterminant à quelles conditions deux intervalles réels sont échangés par une application affine par morceaux avec points de coupure et pentes prescrites (voir Théorème A 4.1 de  \cite{BiSt}).

\begin{definition} \ 
\begin{itemize}
\item Un \textit{$m$-intervalle} est un intervalle réel dont les extrémités sont dans $\mathbb Z \left[\frac{1}{m}\right]$.
 \item Un homéomorphisme $f: I\rightarrow I'$ est dit  $PL_m$ s'il est affine par morceaux avec pentes dans $\langle m \rangle$  et points de coupure dans $\mathbb Z \left[\frac{1}{m}\right]$. 
\item Deux intervalles $I$ et $I'$ sont  dits \textit{$PL_m$-équivalents} s'il existe  un homéomorphisme $PL_m$ entre-eux.
\end{itemize}
\end{definition}

\begin{proposition} (\cite{BiSt}, \cite{Lio}) \label{BS} Deux $m$-intervalles $I$ et $I'$ sont $PL_m$-équivalents  si et seulement si $\vert I \vert -  \vert I' \vert\in ( m-1). \mathbb Z \left[\frac{1}{m}\right]$, où $\vert I \vert$ représente la longueur de l'intervalle $I$. \end{proposition}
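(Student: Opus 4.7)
Mon plan consiste à démontrer les deux implications séparément.

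La direction \emph{nécessaire} se réduit à un calcul direct. Soit $f\colon I\to I'$ un homéomorphisme $PL_m$ de points de coupure $a_0<a_1<\dots<a_n$ (bornes de $I$ comprises) dans $\mathbb Z[1/m]$ et de pentes $m^{s_i}$ sur chaque $[a_i,a_{i+1}]$. On a
\[ |I'|-|I| \;=\; \sum_{i=0}^{n-1}(m^{s_i}-1)(a_{i+1}-a_i). \]
Les identités $m^s-1=(m-1)(1+m+\dots+m^{s-1})$ pour $s\geq 1$ et $m^{-s}-1=-m^{-s}(m^s-1)$ pour $s\geq 1$ assurent que $m^{s_i}-1\in (m-1)\mathbb Z[1/m]$ pour tout $s_i\in\mathbb Z$. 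Comme $a_{i+1}-a_i\in\mathbb Z[1/m]$, chaque terme de la somme, donc $|I'|-|I|$, appartient à l'idéal $(m-1)\mathbb Z[1/m]$.

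La direction \emph{suffisante} constitue l'étape principale et l'obstacle technique majeur. Les translations par un élément de $\mathbb Z[1/m]$ étant $PL_m$, on se ramène au cas $I=[0,a]$ et $I'=[0,b]$ avec $a,b>0$ dans $\mathbb Z[1/m]$ et $a-b\in(m-1)\mathbb Z[1/m]$. La symétrie de la $PL_m$-équivalence permet de supposer $a\leq b$, et on écrit $b-a=N(m-1)m^{-K}$ avec $N\in\mathbb Z_{\geq 0}$ et $K\in\mathbb Z$, sachant que l'on peut toujours augmenter $K$ quitte à remplacer $N$ par $Nm^{K-K_0}$. On fixe donc $K$ assez grand pour que $m^{-K}<a$.

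Le cœur de la preuve est la construction d'une brique élémentaire $\phi_K$ qui allonge l'intervalle d'exactement $(m-1)m^{-K}$. Pour $c\in\mathbb Z[1/m]$ avec $m^{-K}<c$, je définis $\phi_K\colon[0,c]\to[0,c+(m-1)m^{-K}]$ par $\phi_K(x)=mx$ sur $[0,m^{-K}]$ et $\phi_K(x)=x+(m-1)m^{-K}$ sur $[m^{-K},c]$. Elle est $PL_m$ (pentes $m$ et $1$, point de coupure $m^{-K}\in\mathbb Z[1/m]$) et continue au point $m^{-K}$ (valeur commune $m^{1-K}$). En itérant $\phi_K$ exactement $N$ fois à partir de $[0,a]$, la condition $m^{-K}<$ longueur courante restant vérifiée car la longueur ne fait que croître, on construit l'homéomorphisme $PL_m$ cherché de $[0,a]$ sur $[0,b]$. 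Le point délicat, à savoir le choix uniforme de $K$ compatible à chaque étape de l'itération, est ainsi réglé en amont grâce à la flexibilité de l'écriture $b-a=N(m-1)m^{-K}$.
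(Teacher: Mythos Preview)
Your proof is correct and follows essentially the same strategy as the paper. The necessary direction is identical: expand $|I'|-|I|$ as a sum and use $m^{s}-1\in(m-1)\mathbb Z[1/m]$. For the sufficient direction, both arguments reduce to $[0,a]$, $[0,b]$ and build an explicit $PL_m$ map from slopes $1$ and $m$; the only difference is organisational. You fix $K$ large, define a single brick $\phi_K$ that lengthens by $(m-1)m^{-K}$, and iterate it $N$ times. The paper instead absorbs the whole difference in one step when the coefficient is small (identity on an initial segment, slope $m$ on a final segment of suitable length), and when the coefficient is too large it first shrinks by a power of $m$ and then corrects with a single slope-$m^p$ piece. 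The paper's construction produces a map with at most three affine pieces, whereas yours may use $N+1$; on the other hand your version is pleasantly uniform, avoiding a case distinction. Neither gains anything essential over the other.
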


\begin{proof}

Soient  $I=[a,c]$ et $I'=[a',c']$ avec  $a,a',c,c'\in \mathbb Z \left[\frac{1}{m}\right]$.    Supposons qu'il existe $f$ un homéomorphisme $PL_m$ entre $I$ et $I'$. Notons  $a=b_0<b_1 ...b_{n-1}<b_n =c$,  $ b_i\in \mathbb{Z} \left[\frac{1}{m}\right]$, les points de coupure de $f$ et $\lambda_i=m^{k_i}$, $k_i\in \mathbb{Z}$,  la pente de $f$  sur $[b_{i-1},b_{i}]$. Nous allons montrer que $\vert I \vert -  \vert I' \vert= (c-a)-(c'-a')\in ( m-1). \mathbb Z \left[\frac{1}{m}\right]$.

\smallskip

Comme $ c-a = \sum_i (b_i -b_{i-1})$ et  $c'-a' = \sum_i \lambda_i (b_i -b_{i-1})$,  on a 

$  (c-a) - (c'-a')  = \sum_i (1-\lambda_i)(b_i -b_{i-1})$ et 

$ (1-\lambda_i)= -(m-1)\sum_{p=0}^{k_i-1} m^{p}=(m-1)M_i$  avec $\displaystyle  M_i  \in\mathbb{Z}$ et finalement 

$(c-a) - (c'-a') = (m-1) \sum_i M_i (b_i -b_{i-1}) \in (m-1).\mathbb{Z} \left[\frac{1}{m}\right]$. 

\medskip

\noindent R\'eciproquement, supposons  $ \vert I \vert -  \vert I' \vert\in ( m-1).\mathbb{Z} \left[\frac{1}{m}\right]$ \ \ $(*)$.

Quitte à composer à la source  et au but  par des rotations d'angles convenables dans $\mathbb{Z} \left[\frac{1}{m}\right]$,  on peut supposer que $I=[0,b]$ et 
$I'=[0,b']$ avec $b,b' \in \mathbb{Z} \left[\frac{1}{m}\right]$ positifs.

La condition $(*)$ se traduit par le fait qu'il existe $a \in \mathbb{Z} \left[\frac{1}{m}\right]$
tel que $ b' = b + (m-1) a$ et il s'agit de construire un $PL_m$-homéomorphisme $f: [0,b] \to [0,b +  (m -1)a]$ pour tous $a,b\in \mathbb Z \left[\frac{1}{m}\right]$ avec $b\geq 0$ et  $b + (m-1) a\geq 0$. L'inverse d'un homéomorphisme $PL_m$ entre $m$-intervalles étant $PL_m$, on peut aussi supposer, quitte à échanger $b$ et $b'$, que $a\geq 0$.

\smallskip

{\bf Cas 1 :  $a<b$} ($0 \leq b-a\leq b$).  L'application  $f_0 : [0,b] \rightarrow [0, b+ (m-1)a]$  définie par   $$f_0(x) = \left\{ \begin{array}{ccc} & x &  {\text{ si }} x\in [0, b-a] \cr
& m(x -(b-a)) +(b-a) &  {\text{ si }} x\in [ b-a, b] 
\end{array} \right.$$ est l'homéomorphisme $PL_m$ cherché.

\smallskip

{\bf Cas 2 : $a\geq b$.}  Choisissons $p\in \mathbb N$ tel que $0\leq m^{-p} a <b$.  D'après le cas 1, il existe $f_0: [0,b] \rightarrow [0, b+ (m-1) m^{-p} a]$ ayant les propriétés requises. On définit alors $f_1 :  [0, b+ (m-1) m^{-p} a]\rightarrow [0, b+ (m-1) a]$ par $$f_1(x) = \left\{ \begin{array}{cc}  x &{\text{ si }} x\in [0, b] \cr
 m^p(x -b) + b  &{\text{ si }}  \ \ \ \ \ \ \ x\in [ b,  b+ (m-1) m^{-p} a]. 
\end{array} \right. $$ L'application cherchée est $f= f_1\circ f_0$. \end{proof}
\medskip

\begin{consequence}[Isomorphisme de Bieri-Strebel] \label{cons:1}
Soient $m>1$ un entier, si $r$ et $r'$ sont deux entiers positifs congrus modulo $m-1$ alors  les groupes $T_{r,m}$ et  $T_{r',m}$ sont isomorphes. Par suite tout $T_{r',m}$ est isomorphe à l'un des $m-1$ groupes  $T_{r,m}$, $r\in {1, ..., m-1}$.
\end{consequence}

\begin{remark}\label{rema :1}
Tous les intervalles dyadiques sont $PL_2$-équivalents et par suite tous les groupes $T_{r,2}$ sont isomorphes à $T$.
\end{remark}

\subsection{Nombres de rotation des éléments d'ordre fini.}

\begin{proposition} (\cite{Lio}) \label{prop :2} Soient  $m\geq 2$, $r\geq 1$ et    $q\geq 1$  des  entiers. 
\begin{enumerate}

\item Si le groupe  $T_{r,m}$ contient un élément d'ordre $q$ alors pour tout $p\in \mathbb{N}^*$, le groupe  $ T_{r,m}$ contient un élément d'ordre fini de nombre de rotation $\frac{p}{q}$.
 
\item Le groupe $T_{r,m}$ contient un élément d'ordre $q$ si et seulement si pgcd$(m-1,q)$ divise  $r$.
\end{enumerate}
\end{proposition}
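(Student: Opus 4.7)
Le plan est de traiter les trois affirmations successivement : (1), puis les deux sens de (2).

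Pour (1), l'argument sera formel. Si $f\in T_{r,m}$ est d'ordre $q\geq 2$, les propriétés du nombre de rotation rappelées plus haut donnent $\rho(f)=p_0/q$ avec $p_0\wedge q = 1$, d'où $\rho(f^k) = kp_0/q$ dans $\mathbb S^1$ pour tout $k\in \mathbb Z$. Comme $p_0$ est inversible modulo $q$, pour tout $p\in \mathbb N^*$ je choisirai un entier $k$ vérifiant $kp_0\equiv p \pmod q$ ; alors $f^k\in T_{r,m}$ est d'ordre fini divisant $q$ et de nombre de rotation $p/q$. Le cas $q=1$ est trivial.

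Pour le sens direct de (2), je poserai $d=\text{pgcd}(m-1,q)$. Étant donné $f\in T_{r,m}$ d'ordre $q\geq 2$, conjugué à une rotation non triviale, $f$ n'a aucun point fixe et l'orbite d'un point $x_0\in \mathbb Z\left[\frac{1}{m}\right]$ contient exactement $q$ éléments. Ces points découpent $\mathbb S_r$ en $q$ arcs consécutifs $J_0,\dots,J_{q-1}$ à extrémités dans $\mathbb Z\left[\frac{1}{m}\right]$, et $f$ les permute circulairement en induisant sur chacun un homéomorphisme $PL_m$ vers son image. La Proposition \ref{BS} donne alors $|J_i|\equiv |J_0|$ modulo $(m-1)\mathbb Z\left[\frac{1}{m}\right]$ pour tout $i$ ; en sommant, $r=\sum_i |J_i| \equiv q|J_0|$ modulo $(m-1)\mathbb Z\left[\frac{1}{m}\right]$. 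Par suite $r\in q\mathbb Z\left[\frac{1}{m}\right]+(m-1)\mathbb Z\left[\frac{1}{m}\right]=d\mathbb Z\left[\frac{1}{m}\right]$ ; comme $d$ divise $m-1$, on a $\text{pgcd}(d,m)=1$, d'où $d\mathbb Z\left[\frac{1}{m}\right]\cap \mathbb Z = d\mathbb Z$, et finalement $d\mid r$.

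Pour la réciproque, supposant $d\mid r$, je construirai explicitement un élément d'ordre $q$. L'hypothèse assure l'existence d'un $\ell\in \mathbb Z\left[\frac{1}{m}\right]$, strictement positif et aussi petit que voulu, tel que $r-q\ell\in (m-1)\mathbb Z\left[\frac{1}{m}\right]$ : il suffit de résoudre $q\ell_0\equiv r$ modulo $m-1$ dans $\mathbb Z$ (possible car $d\mid r$) puis de poser $\ell=\ell_0/m^s$ pour $s$ assez grand. Pour $\ell$ suffisamment petit, $r-q\ell$ est de plus positif. Je subdiviserai alors $\mathbb S_r$ en $q$ $m$-intervalles consécutifs $I_0,\dots,I_{q-1}$ de longueurs respectives $r-(q-1)\ell,\ \ell,\dots,\ell$, toutes positives et dans $\mathbb Z\left[\frac{1}{m}\right]$. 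La Proposition \ref{BS} fournira un homéomorphisme $PL_m$ $\varphi_0:I_0\to I_1$ ; les applications $\varphi_i:I_i\to I_{i+1}$ pour $1\leq i\leq q-2$ seront des translations, donc trivialement $PL_m$ ; enfin $\varphi_{q-1}:I_{q-1}\to I_0$ sera défini comme l'inverse de $\varphi_{q-2}\circ\cdots\circ\varphi_0$, encore $PL_m$. Le recollement des $\varphi_i$ produira $f\in T_{r,m}$ vérifiant $f^q=\text{id}$ et $\rho(f)=1/q$, ce qui forcera l'ordre de $f$ à valoir exactement $q$.

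La difficulté principale se situe dans le sens direct de (2) : il faudra passer avec soin de la congruence dans $\mathbb Z\left[\frac{1}{m}\right]$ à la divisibilité entière $d\mid r$, en exploitant que $d$ divise $m-1$ et est donc premier à $m$. La réciproque est en comparaison une construction essentiellement mécanique, à condition de noter qu'il peut être nécessaire de prendre des longueurs dans $\mathbb Z\left[\frac{1}{m}\right]\setminus\mathbb Z$ lorsque $r$ est petit devant $(q-1)(m-1)$.
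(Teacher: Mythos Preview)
Your argument is correct. For item~(1) and for the implication ``existence $\Rightarrow$ divisibility'' in item~(2), your proof is essentially the paper's: the paper first uses item~(1) to reduce to an element with rotation number $1/q$ and takes $x_0=0$, but this is cosmetic---the orbit argument and the passage from the congruence in $(m-1)\mathbb Z[\tfrac1m]$ to the integral divisibility $d\mid r$ via $\gcd(d,m)=1$ are the same in substance.

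The genuine difference lies in the converse ``$d\mid r$ $\Rightarrow$ existence''. The paper does \emph{not} construct an element directly in $T_{r,m}$: it writes $r=uq+v(m-1)$ by Bezout, invokes the Bieri--Strebel isomorphism $T_{r,m}\cong T_{uq,m}$ (the Conséquence stated just after Proposition~\ref{BS}), and observes that the rotation $x\mapsto x+u$ of $\mathbb S_{uq}$ lies in $T_{uq,m}$ and has order~$q$. This is a two-line argument once the isomorphism is available. Your approach instead builds an explicit order-$q$ element inside $T_{r,m}$ by partitioning $\mathbb S_r$ into $q$ suitable $m$-intervals and gluing $PL_m$ maps so that the last piece cancels the previous ones. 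This is longer but self-contained, and it is exactly the kind of construction the paper carries out later (Proposition~\ref{prop :4}, Case~1) when it needs an element with a \emph{prescribed} value $f(0)=a$; so your route anticipates that refinement, while the paper's route here exploits the freedom of not having to control $f(0)$.
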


\begin{proof} Nous supposons $q\geq 2$, pour $q=1$ le résultat est trivial. 

\textbf{Premier item.} Supposons qu'existe $f\in T_{r,m}$ d'ordre $q$.  On a $\rho(f)=\frac{n}{q}$ où  les entiers $n$ et $q$ sont premiers entre-eux. Par Bezout, il existe $u$ et $v$ entiers tels que $1 =un + vq$.

Soit $p\in \mathbb{N^*}$, on définit  un élément d'ordre fini de $T_{r,m}$ par $g=f^{up}$. On a  $ \rho(g)= \rho(f ^{up}) = up \rho(f) = \frac{upn}{q}=\frac{p(1-vq)}{q} = \frac{p}{q}-pv= \frac{p}{q} (mod 1)$.

\medskip

\textbf{Deuxième item.}

\textbf{Condition nécessaire}.  Supposons que $r$  soit un  multiple de $pgcd(q, m-1)$. D'après  Bezout, $r=uq + v(m-1)$, donc $r= uq $ modulo $(m-1)$. L'isomorphisme de
Bieri-Strebel implique que les   groupes $T_{uq, m }$ et  $T_{r, m}$
sont isomorphes. De plus, le groupe $T_{uq, m }$ contient  la  rotation $ x\mapsto x+u$ d'ordre $q$ et de nombre de rotation $\frac{1}{q}$.

\medskip

\textbf{Condition suffisante}.    Par  hypothèse et d'après le premier item, il existe  $f\in T_{r,m}$ d'ordre $q$ et  $\rho{(f)} =\frac{1} {q}$. Fixons  $\tilde f$ un relevé de  $f$ à  $\mathbb{R}$ et identifions $f$  à  $\tilde f (mod \  r)$.   

La $f$-orbite  de  $0$  est  ordonnée comme suit $0 <f(0) <....< f^{q-1}(0)<r$.

Les $q$ intervalles $I_i:=[f^{i-1}(0),f^{i}(0)]$, $i=1,\cdots,q$, sont tous $PL_m$-équivalents, donc d'après le critère de Bieri-Strebel,  $\vert I_i \vert =\vert I_1 \vert   \  mod \  (m-1).\mathbb Z \left[\frac{1}{m}\right]$ et $\vert I_1 \vert=f(0)$. Par conséquent $\displaystyle r= \vert I_1 \vert +\cdots+ \vert I_q \vert = qf(0) \  mod \ (m-1).\mathbb Z \left[\frac{1}{m}\right].$

On en déduit qu'il  existe des  entiers $u,v,s$ tels que  $r - qf(0) =(m-1)\frac {v} { m^s}$ \
et  \ $f(0)  = \frac {u} { m^s}$. Ainsi, 
$ m^s r -qu = (m-1)v$, autrement dit  $ m^s r = qu + (m-1)v$.  Ceci implique
que $ m^s r$ est un multiple du $pgcd(q, m-1)$.  Les entiers $(m-1)$ et  $m^s$ étant premiers  entre-eux, on conclut que $ r$ est un multiple de  $pgcd(q, m-1)$. \end{proof}

\subsection{Critère de Conjugaison $PL_m$}
 
\begin{proposition}\label{prop :3}
Soient $f_1$ et $f_2$ deux éléments de $T_{r,m}$ d'ordre fini $q$ et de nombre de rotation $\frac{1}{q}$, on note $f_i(0) =\frac{N_i}{m^{s_i}}$, $i=1,2$. Les propriétés suivantes sont équivalentes :
\begin{enumerate}
\item $f_1$ et $f_1$ sont $PL_m$-conjugués (dans $T_{r,m}$),
\item $N_2-N_1$ est un multiple de $m-1$,
\item $f_1(0)- f_2(0) \in (m-1) . \mathbb Z[\frac{1}{m}]$ (autrement dit,  les intervalles $[0,f_1(0)]$ et $[0,f_2(0)]$ sont $PL_m$-équivalents).
\end{enumerate} 
\end{proposition}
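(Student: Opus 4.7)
Je compte établir la chaîne $(2) \Leftrightarrow (3) \Leftrightarrow (1)$, en s'appuyant systématiquement sur le critère de Bieri-Strebel (Proposition~\ref{BS}).

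L'équivalence $(2) \Leftrightarrow (3)$ est de nature purement arithmétique. Puisque $m \equiv 1 \pmod{m-1}$, l'application naturelle $\mathbb{Z}[\frac{1}{m}] \to \mathbb{Z}/(m-1)\mathbb{Z}$ est un morphisme bien défini qui envoie tout $N/m^s$ sur $N \pmod{m-1}$. Appliqué à $f_1(0) - f_2(0) = N_1/m^{s_1} - N_2/m^{s_2}$, on voit que l'appartenance à $(m-1)\mathbb{Z}[\frac{1}{m}]$ équivaut à la congruence $N_1 \equiv N_2 \pmod{m-1}$, ce qui est exactement $(2)$.

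Pour $(3) \Rightarrow (1)$, je procéderai par recollement dynamique. La Proposition~\ref{BS} fournit un $PL_m$-homéomorphisme $h_0 : [0, f_2(0)] \to [0, f_1(0)]$. Les orbites de $0$ sous $f_i$ découpent $\mathbb{S}_r$ en $q$ intervalles $I_j^{(i)} = [f_i^{j-1}(0), f_i^j(0)]$. Je définirai $h : \mathbb{S}_r \to \mathbb{S}_r$ en posant, sur chaque $I_j^{(2)}$, $h := f_1^{j-1} \circ h_0 \circ f_2^{-(j-1)}$. Les raccords aux points $f_2^j(0)$ sont automatiquement compatibles ($h(f_2^j(0)) = f_1^j(0)$ des deux côtés), et chaque morceau, composée de $PL_m$-homéomorphismes entre $m$-intervalles, reste $PL_m$. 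Ainsi $h \in T_{r,m}$ et l'identité $h \circ f_2 = f_1 \circ h$ vaut par construction.

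Pour $(1) \Rightarrow (3)$, je me donnerai $h \in T_{r,m}$ tel que $f_1 \circ h = h \circ f_2$ et noterai $y_0 = h(0)$; ce point appartient à $\mathbb{Z}[\frac{1}{m}]$, soit parce que $0$ est un point de coupure de $h$ (et son image aussi, par définition de $T_{r,m}$), soit parce que $h$ est affine au voisinage de $0$ avec équation $m^k x + \beta$, l'extrémité adjacente de $h$ forçant $\beta \in \mathbb{Z}[\frac{1}{m}]$, d'où $y_0 = \beta$. Je choisirai ensuite des relevés compatibles tels que $\tilde f_1 \circ \tilde h = \tilde h \circ \tilde f_2$ (un terme résiduel $kr$ peut être absorbé en retranchant un multiple de $r$ à $\tilde h$). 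Le critère de Bieri-Strebel appliqué aux deux $PL_m$-homéomorphismes $\tilde h : [0, f_2(0)] \to [y_0, \tilde f_1(y_0)]$ et $\tilde f_1 : [0, y_0] \to [f_1(0), \tilde f_1(y_0)]$ livrera respectivement $f_2(0) \equiv \tilde f_1(y_0) - y_0$ et $y_0 \equiv \tilde f_1(y_0) - f_1(0)$ modulo $(m-1)\mathbb{Z}[\frac{1}{m}]$; la soustraction fournira $f_1(0) \equiv f_2(0) \pmod{(m-1)\mathbb{Z}[\frac{1}{m}]}$, c'est-à-dire $(3)$.

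Le point le plus délicat sera la direction $(1) \Rightarrow (3)$. La difficulté principale est de gérer proprement le passage du cercle à la droite : choix cohérent des relevés, vérification de l'appartenance $y_0 \in \mathbb{Z}[\frac{1}{m}]$, et contrôle des orientations garantissant que les deux arcs auxquels on applique Bieri-Strebel se relèvent en de véritables $m$-intervalles de $\mathbb{R}$.
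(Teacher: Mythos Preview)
Your argument is correct and self-contained, but it follows a different route from the paper's. The paper first isolates an auxiliary lemma: any $f\in T_{r,m}$ of order $q$ with $\rho(f)=\frac{1}{q}$ and $f(0)=\frac{N_a}{m^{s_a}}$ is $PL_m$-conjugate to the rotation $R_{N_a}$ of the circle $\mathbb{S}_{qN_a}$, the conjugacy $H$ being built by exactly your recollement recipe with the affine choice $h_0(x)=m^{s_a}x$. This lemma reduces $(1)\Rightarrow(2)$ to deciding when two genuine rotations $R_{N_1}$ and $R_{N_2}$ (on circles of possibly different lengths) are $PL_m$-conjugate; after composing at the target by a rotation one may assume the conjugacy fixes $0$, and then a \emph{single} application of Bieri--Strebel to $[0,N_1]\to[0,N_2]$ finishes. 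You instead stay on $\mathbb{S}_r$ throughout: your $(3)\Rightarrow(1)$ is the same dynamical recollement performed directly between $f_1$ and $f_2$, and your $(1)\Rightarrow(3)$ handles an arbitrary conjugacy $h$ by applying Bieri--Strebel \emph{twice} (to $\tilde h$ on $[0,f_2(0)]$ and to $\tilde f_1$ on $[0,y_0]$) and subtracting. The paper's detour through rotations buys a reusable structural lemma and a cleaner necessity argument; your approach avoids ever changing the ambient circle and is a bit more elementary.

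One small slip in your parenthetical: translating $\tilde h$ by $kr$ does \emph{not} absorb the residual, since by $r$-equivariance both $\tilde f_1\circ(\tilde h+kr)$ and $(\tilde h+kr)\circ\tilde f_2$ shift by the same $kr$. You should adjust the lift of $f_1$ (or $f_2$) instead, which does work; this is a trivial fix and does not affect the argument.
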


\begin{proof}
\begin{lemma}
Soit $a=\frac{N_a}{m^{s_a}}\in \mathbb Z[\frac{1}{m}]$, tout homéomorphisme $f\in T_{r,m}$ d'ordre $q$, de nombre de rotation $\frac{1}{q}$ et vérifiant $f(0)=a$ est $PL_m$-conjugué à la rotation $R_{N_a}$ de $\mathbb S_{qN_a}$.
\end{lemma}

\begin{proof}  L'intervalle $[0,r[$ s'écrit $\dis \bigcup_{i=1}^{q} I_i$, où  $I_i =[f^{i-1}(0), f^{i}(0)]$. 
  
On considère l'application affine $H_1 : I_1=[0,a] \rightarrow [0,N_a]$, $x \mapsto  m^{s_a} .x$ et on définit par récurrence  $H_i : I_i \rightarrow [i-1,i]$ par $H_{i+1} = R_1 \circ H_i \circ f^{-1}.$
 
 \smallskip 
 
On vérifie facilement que l'application $H : \mathbb S_r \rightarrow  \mathbb S_{qN_a}$ définie par $H_{\vert I_i} = H_i$ est un $PL_m$-homéomorphisme  qui conjugue $f$ à $R_{N_a}$.    \end{proof}  

\noindent $(1)\implies (2)$. D'après le lemme précèdent, $f_i$ est $PL_m$-conjuguée à la rotation $R_{N_i}$ de $\mathbb S_{qN_i}$. Il nous reste à étudier à quelles conditions deux telles rotations sont $PL_m$-conjuguées.

Soit $h :\mathbb S_{qN_1} \rightarrow \mathbb S_{qN_2}$ une $PL_m$-conjugaison entre $R_{N_1}$ et $R_{N_2}$, quitte à composer au but $h$ par la rotation $R_{-h(0)}$ de $\mathbb S_{qN_2}$, on peut supposer que $h(0)=0$. 

Les intervalles $[0,N_1]$ et $[0,N_2]$ étant $PL_m$-équivalents, l'entier $N_2-N_1 \in  (m-1).\mathbb Z [\frac{1}{m}]$ et par suite $N_2-N_1$ est un multiple de $m-1$.

\noindent $(2)\implies (1)$. Si $N_2-N_1$ est un multiple de $(m-1)$ alors  $[0,N_1]$ et $[0,N_2]$ sont $PL_m$-équivalents et on peut reprendre la preuve du lemme précédent avec pour $H_1$ l'homéomorphisme de Bieri-Strebel qui  échange ces 2 intervalles.

\noindent $(2) \Longleftrightarrow (3)$ résulte du calcul suivant :

$f_1(0)-f_2(0)=\frac{N_1}{m^{s_1}}-\frac{N_2}{m^{s_2}}= N_1(m^{-s_1} -1) -N_2(m^{-s_2} -1)  + (N_1-N_2)$

\hskip 4.7 truecm $\ =(N_1-N_2)  \ mod \  (m-1).\mathbb Z [\frac{1}{m}]$. \end{proof}

\section{Classes de conjugaison d'éléments d'ordre fini dans les groupes de Brown-Thompson.}

\begin{lemma}\label{lemm :2}
Soient $p$ et $q>1$ deux entiers premiers entre-eux  et $u>0$, $v$ entiers tels que $up+vq=1$. 

Deux éléments $f_1$ et $f_2$ de $T_{r,m}$ d'ordre fini $q$ et de nombre de rotation $\frac{p}{q}$ sont $PL_m$-conjugués si et seulement si $f_1^u$ et $f_2^u$  (de nombre de rotation $\frac{1}{q}$) sont $PL_m$-conjugués.
\end{lemma}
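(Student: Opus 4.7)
The plan is to reduce this to a straightforward algebraic observation about the cyclic group generated by each $f_i$, exploiting the fact that $u$ is invertible modulo $q$.

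The direct implication is essentially formal: if $h\in T_{r,m}$ satisfies $h f_1 h^{-1}=f_2$, then raising to the $u$-th power on both sides gives $h f_1^u h^{-1}=f_2^u$, so the same conjugator works.

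For the converse, the key observation is that from the Bezout identity $up+vq=1$ one reads off $up\equiv 1\pmod q$, so $p$ is an inverse of $u$ modulo $q$. Since $f_i$ has order $q$, this yields
\[
(f_i^u)^p = f_i^{up} = f_i^{1-vq} = f_i\cdot (f_i^q)^{-v} = f_i,
\]
i.e.\ each $f_i$ is recovered as a fixed power of $f_i^u$. Consequently, if $h\in T_{r,m}$ conjugates $f_1^u$ to $f_2^u$, then
\[
h f_1 h^{-1} = h (f_1^u)^p h^{-1} = (h f_1^u h^{-1})^p = (f_2^u)^p = f_2,
\]
which gives the $PL_m$-conjugacy of $f_1$ and $f_2$.

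Before writing the argument, I will quickly check that the hypothesis on rotation numbers matches the statement: since $\rho(f_i^u)=u\rho(f_i)=\frac{up}{q}=\frac{1-vq}{q}=\frac{1}{q}\pmod 1$, the elements $f_1^u$ and $f_2^u$ indeed have rotation number $\frac{1}{q}$ (and still order $q$, as $\gcd(u,q)=1$), so Proposition \ref{prop :3} can be applied to them if one later wishes to test the $PL_m$-conjugacy of $f_1^u$ and $f_2^u$ via $f_1^u(0)$ and $f_2^u(0)$. There is no real obstacle here; the content of the lemma is purely the group-theoretic fact above, and the only thing to be careful about is recording that $u$ must be taken invertible mod $q$, which is automatic from $up+vq=1$.
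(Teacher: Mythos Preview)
Your proof is correct and follows essentially the same route as the paper's: both directions use that conjugacy passes to powers, together with the identity $(f_i^u)^p=f_i^{1-vq}=f_i$ for elements of order $q$; you have simply written out in detail what the paper summarizes in two lines.
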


La preuve de ce lemme résulte du fait que la $PL_m$-conjugaison se transmet aux puissances et des généralités suivantes : 
  
On a $\rho ( f^{u})= \frac{up}{q}= \frac{1-vq}{q}=  \frac{1}{q}$ et $(f^u)^p = f^{1-vq} = f$ dès que $f$ est d'ordre fini $q$.
\begin{remark} \label{rema :2} Une conséquence de ce  lemme est qu'étant donnés  $p$ et $q$ deux entiers premiers entre-eux, le nombre de classes de conjugaison d'éléments d'ordre fini $q$ et de nombre de rotation $\frac{p}{q}$ ne dépend pas de $p$.
\end{remark}

\subsection{Classes de conjugaisons dans $T_{r,2}$.} \ 

Soit $q\in \mathbb N^*$, d'après l'invariance par conjugaison topologique du nombre de rotation et la remarque précédente, il suffit de déterminer le nombre de classes de conjugaison d'éléments d'ordre $q$ et de nombre de rotation  $\frac{1}{q}$.

La Proposition \ref{prop :2} indique que $T_{r,2}$ contient un élément $f_1$ d'ordre $q$ et de nombre de rotation  $\frac{1}{q}$. Tous les intervalles dyadiques étant $PL_2$-équivalents par la Remarque \ref{rema :1}, l'item (3) de la  Proposition \ref{prop :3} est vérifié pour tout autre  $f_2\in T_{r,2}$ d'ordre $q$ et de nombre de rotation  $\frac{1}{q}$. On en déduit qu'il y a exactement une classe de conjugaison d'éléments  d'ordre $q$ et de nombre de rotation  $\frac{1}{q}$ ; le résultat de Matucci en découle.

\subsection{Preuve du Théorème \ref{thm:1}}

\subsubsection{Preuve de l'item A} Il résulte directement de l'item (2) de la Proposition \ref{prop :2}.

\subsubsection{Preuve de l'item B} Par la Remarque \ref{rema :2}, il suffit d'établir le résultat pour $p=1$.

\begin{proposition}\label{prop :4}  Soient  $q\in \mathbb N^{>1}$ et $a=\frac{N_a}{m^{s_a}} \in \mathbb Z [\frac{1}{m}] \cap ]0,r[$. Les propriétés suivantes sont équivalentes
\begin{enumerate}
\item Il existe  $f\in T_{r, m}$ d'ordre $q$, vérifiant $f(0)=a$  et de nombre de rotation $\frac{1}{q}$. 
\item $r-qa \in (m-1) \mathbb Z [\frac{1}{m}]$.
\item $r-qN_a$ est un multiple de $m-1$.
\end{enumerate}
\end{proposition}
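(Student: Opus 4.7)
Le plan est de prouver séparément les équivalences $(1) \Leftrightarrow (2)$ et $(2) \Leftrightarrow (3)$. L'implication $(1) \Rightarrow (2)$ reprend l'argument déjà utilisé dans la preuve de la Proposition \ref{prop :2} (condition suffisante du deuxième item) : si $f$ existe, les $q$ intervalles $I_i = [f^{i-1}(0), f^i(0)]$ partagent $[0,r[$ et sont deux à deux $PL_m$-équivalents via des restrictions de $f$, donc par la Proposition \ref{BS} chaque $|I_i|$ est congru à $|I_1| = a$ modulo $(m-1)\mathbb Z[\frac{1}{m}]$, et en sommant on obtient $r \equiv qa$. L'équivalence $(2) \Leftrightarrow (3)$ est arithmétique : on a $m^{s_a}(r-qa) = m^{s_a} r - qN_a \in \mathbb Z$, et comme $m \equiv 1 \pmod{m-1}$ avec pgcd$(m, m-1)=1$, la condition $r-qa \in (m-1)\mathbb Z[\frac{1}{m}]$ équivaut à $m^{s_a}r - qN_a \in (m-1)\mathbb Z$, c'est-à-dire à $r-qN_a \in (m-1)\mathbb Z$.

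L'implication centrale $(2) \Rightarrow (1)$ sera traitée par construction explicite de $f$. L'idée est de choisir une subdivision $0=x_0<x_1=a<x_2<\cdots<x_{q-1}<x_q=r$ de points de $\mathbb Z[\frac{1}{m}]$ dont les longueurs $\ell_i := x_i-x_{i-1}$ sont positives et congrues à $a$ modulo $(m-1)\mathbb Z[\frac{1}{m}]$, de sorte que chaque $I_i := [x_{i-1},x_i]$ soit $PL_m$-équivalent à $I_1=[0,a]$ par la Proposition \ref{BS}. Pour $q=2$, $\ell_2=r-a$ convient directement. Pour $q\geq 3$, je pose $\ell_2=\cdots=\ell_{q-1}=b$ pour un $b \in \mathbb Z[\frac{1}{m}]$ positif vérifiant $b\equiv a$ modulo $(m-1)\mathbb Z[\frac{1}{m}]$ et $b<(r-a)/(q-2)$, dont l'existence résulte de la densité de $(m-1)\mathbb Z[\frac{1}{m}]$ dans $\mathbb R$, puis $\ell_q=r-a-(q-2)b>0$, automatiquement congru à $a$ grâce à l'hypothèse $(2)$. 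La Proposition \ref{BS} fournit alors pour chaque $i$ un $PL_m$-homéomorphisme $g_i : I_1\to I_i$, et je pose $g_1=\mathrm{id}$. On définit $f$ par morceaux via $f|_{I_i}:=g_{i+1}\circ g_i^{-1}$ (indices cycliques modulo $q$ avec $g_{q+1}:=g_1$). Un calcul immédiat par récurrence donne $f^i|_{I_1}=g_{i+1}$ pour $0\leq i\leq q-1$, d'où $f^q|_{I_1}=g_1=\mathrm{id}$ et donc $f^q=\mathrm{id}$ ; la $f$-orbite de $0$ étant $\{x_0, x_1, \ldots, x_{q-1}\}$ rangée dans l'ordre cyclique, on a $f(0)=a$, $\rho(f)=1/q$, et $f$ est d'ordre exactement $q$.

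Le principal obstacle technique se situe dans le choix de la subdivision lorsque $a$ est \emph{grand} (typiquement $a>r/(q-1)$), cas où la subdivision naïve $\ell_i=a$ pour $i<q$ ne rentre plus dans $[0,r]$; la densité de $(m-1)\mathbb Z[\frac{1}{m}]$ dans $\mathbb R$ permet néanmoins de trouver des longueurs intermédiaires arbitrairement petites dans la bonne classe de congruence. Le choix $g_1=\mathrm{id}$ évite par ailleurs toute correction a posteriori de $g_q$ pour assurer $f^q=\mathrm{id}$ : la cyclicité est automatique.
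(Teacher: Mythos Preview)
Your proof is correct. The implications $(1)\Rightarrow(2)$ and $(2)\Leftrightarrow(3)$ match the paper's argument essentially word for word. For $(2)\Rightarrow(1)$ you take a genuinely different route from the paper. The paper treats the case $r-qa\geq 0$ by an explicit formula: $f_a$ is the translation $x\mapsto x+a$ on $[0,(q-2)a]$, a single Bieri--Strebel map $h_{BS}:[(q-2)a,(q-1)a]\to[(q-1)a,r]$, and $h_{BS}^{-1}$ shifted back on the last piece; the case $r-qa<0$ is then reduced to the first by replacing $a$ with $m^{-p}a$ and conjugating by a Bieri--Strebel map $H_{BS}$ sending $[0,a]$ to $[0,m^{-p}a]$. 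Your construction is more uniform: you build a subdivision with all pieces $PL_m$--equivalent to $[0,a]$ directly, invoking the density of $(m-1)\mathbb Z[\tfrac1m]$ to force the intermediate lengths to be small, and then glue the Bieri--Strebel equivalences cyclically via $f|_{I_i}=g_{i+1}\circ g_i^{-1}$. Your approach avoids the case split on the sign of $r-qa$ at the cost of a (mild) density argument; the paper's approach gives a more explicit formula when $qa\leq r$ but pays with a conjugation step otherwise. One small point: the passage ``$f^q|_{I_1}=\mathrm{id}$ donc $f^q=\mathrm{id}$'' deserves a line --- the same recurrence gives $f^k|_{I_j}=g_{j+k}\circ g_j^{-1}$ for every $j$, whence $f^q|_{I_j}=\mathrm{id}$.
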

\begin{proof} \ 

\noindent $(1) \implies (2)$. Comme dans la preuve de la Proposition \ref{prop :2}, les $q$ intervalles $I_i:=[f^{i-1}(0),f^{i}(0)[$,  $i=1,\cdots,q$, sont tous $PL_m$-équivalents et forment une partition de $[0,r[$. Par conséquent $ r= q\vert I_1 \vert =qf(0)  \ mod \ (m-1) \mathbb Z [\frac{1}{m}]$.

\medskip

\noindent $(2) \implies (1)$. Supposons que $r-qa \in  (m-1) \mathbb Z [\frac{1}{m}]$.

\noindent \textbf{Cas 1 :} $r-qa \geq 0$. Considère l'homéomorphisme $f_a\in  PL_+(\mathbb S_r)$ défini par : 

\smallskip

$\left\{ \begin{array}{ccccc} 
& f_a(x)  = &x + a &\text{ si } &x \in [0, (q-2)a] \cr
& f_a(x)  = &h_{BS}(x) &\text{    si } &\ \ \ \ \ \ \  x \in [(q-2)a, (q-1)a] \cr
& f_a(x)  = &h_{BS}^{-1}(x) -(q-2)a &\text{ si } &x \in [(q-1)a, r],  \cr
\end{array}\right. $

\smallskip

\noindent où $h_{BS} : [(q-2)a, (q-1)a]  \rightarrow  [(q-1)a, r]$ est l'application $PL_m$  donnée par la Proposition \ref{BS}, son existence est garantie par $\vert [(q-1)a, r] \vert - \vert [(q-2)a, (q-1)a]\vert = r-qa \in (m-1) \mathbb Z [\frac{1}{m}]$. On vérifie facilement que $f_a(0)=a$ et $f_a$ est d'ordre $q$.

\medskip

\noindent \textbf{Cas 2 :} $r-qa < 0$, choisissons $p \in \mathbb N$ de sorte que $ r - q(m^{-p} a) \geq 0$.

On considère $H_{BS} \in T_{r,m}$ tel que $H_{BS} ([0,a])=[0,m^{-p} a] $, son existence est assurée par le critère de Bieri-Strebel, puisque $m^{-p} a-a = (m^{-p} -1) a \in  (m-1) \mathbb Z [\frac{1}{m}]$. 

Finalement, l'application $PL_m$ définie par  $\displaystyle f_a =H_{BS}^{-1} \circ  f_ {m^{-p} a} \circ  H_{BS}$ est d'ordre $q$ et satisfait $f_a(0)=a$.

\medskip

\noindent $(2) \Longleftrightarrow (3)$ est conséquence du calcul suivant : 

$\displaystyle r-qa=r-q\frac{N_a}{m^{s_a}}=\frac{r m^{s_a}-qN_a}{m^{s_a}} \in (m-1)\mathbb Z [\frac{1}{m}] \Longleftrightarrow $

$r m^{s_a}-qN_a = r (m^{s_a}-1) + (r-qN_a)$ est un multiple de $m-1$ $\Longleftrightarrow $

$r-qN_a$ est un multiple de $m-1$.  \end{proof}

Nous pouvons maintenant calculer le nombre de classes de conjugaison d'éléments d'ordre $q$ de nombre de rotation $\frac{1}{q}$ dans $T_{r,m}$. D'après les Propositions \ref{prop :3} et  \ref{prop :4},   cette quantité est égale au nombre de classes modulo $(m-1)$  d'entiers $N$ tels que $r-qN $ est un multiple de $m-1$, nous affirmons que c'est $d=pgcd(m-1,q)$.

En effet, sous la condition $d$ divise $r$, le critère d'isomorphisme de Bieri-Strebel nous permet de supposer que $r=qu$. Posons $P= u-N$, on a $r-qN = q(u-N)=qP$, le problème se ramène à déterminer le nombre de classes modulo $(m-1)$ d'entiers $P$ tels que $qP$ est un multiple de $m-1$. 

Puisque $m-1 = m_0.d$ et $q=q_0.d$ avec $m_0\wedge q_0=1$, l'entier $qP $ est un multiple de $m-1$ si et seulement si $q_0P $ est un multiple de $m_0$ et donc si et seulement si $P$ est un multiple de $m_0$.  Par conséquent, il y a exactement $d$ tels entiers entre $0$ et $m-2$.

\subsubsection{Preuve de l'item C} D'après l'invariance par conjugaison topologique du nombre de rotation et la Remarque \ref{rema :2}, le nombre de classes de conjugaison d'éléments d'ordre $q$ dans $T_{r,m}$ est  $\varphi(q)$.pgcd$(m-1, q)$, si   pgcd $(m-1, q)$ divise  $r$ et $0$ si non. 

\section{Problèmes d'isomorphisme et de plongement entre groupes de Brown-Thompson. Preuve du Corollaire \ref{coro :1}}

\begin{proposition}\label{prop :5} Soit $m\in \mathbb N^{>1}$.
\begin{enumerate}
\item Parmi les groupes $T_{r,m}$, pour $0< r \leq m-1$,  seul le groupe $T_{m-1,m}$ contient des éléments d'ordre quelconque. Ainsi, il n'existe pas de morphisme injectif $T_{m-1,m}\rightarrow T_{r,m}$, pour $0< r <m-1$. 
\item Si $m_1-1$ possède un diviseur qui ne divise pas $m_2-1$ alors il n'existe pas de morphisme injectif  $T_{1, m_2} \rightarrow  T_{1, m_1}$. 
\end{enumerate} \end{proposition}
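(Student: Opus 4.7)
Les deux items se démontrent par le même principe : tout morphisme injectif préserve l'ordre des éléments de torsion, et la Proposition \ref{prop :2} caractérise les ordres réalisés dans $T_{r,m}$ par la divisibilité $pgcd(m-1, q) \mid r$. Il suffit donc, dans chaque situation, d'exhiber un entier $q$ qui soit ordre d'un élément de la source mais d'aucun élément du but, interdisant toute injection.

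Pour l'item (1), on observe d'abord que $T_{m-1, m}$ contient des éléments de tout ordre, puisque $pgcd(m-1, q) \mid m-1$ trivialement pour tout $q$. Si $0 < r < m-1$, le choix $q = m-1$ donne $pgcd(m-1, m-1) = m-1$, qui ne divise pas $r$ ; le groupe $T_{r, m}$ ne contient donc pas d'élément d'ordre $m-1$, ce qui exclut tout morphisme injectif $T_{m-1,m} \hookrightarrow T_{r,m}$. Pour l'item (2), je choisirais un diviseur \emph{premier} $p$ de $m_1 - 1$ ne divisant pas $m_2 - 1$ : alors $pgcd(m_2 - 1, p) = 1$ divise $1$, donc $T_{1, m_2}$ contient un élément d'ordre $p$, tandis que $pgcd(m_1 - 1, p) = p > 1$ ne divise pas $1$, donc $T_{1, m_1}$ n'en contient aucun, d'où la conclusion.

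La principale difficulté concerne l'item (2) : l'hypothèse telle qu'énoncée (existence d'un diviseur de $m_1 - 1$ ne divisant pas $m_2 - 1$) équivaut à $m_1 - 1 \nmid m_2 - 1$ et est strictement plus faible que l'existence d'un diviseur \emph{premier} ayant la même propriété (considérer $m_1 = 5$, $m_2 = 3$ : l'unique premier de $m_1 - 1 = 4$, à savoir $2$, divise $m_2 - 1 = 2$, alors que $4 \nmid 2$). Dans un tel cas extrême, les ensembles d'ordres d'éléments de torsion des deux groupes coïncident et l'argument ci-dessus n'est pas opérant ; il faudrait alors un invariant plus fin (par exemple la structure des $p$-sous-groupes, ou le nombre de classes de conjugaison pour $r > 1$ via le Théorème \ref{thm:1}). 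J'interpréterais donc l'hypothèse de l'énoncé au sens d'un diviseur premier, cas auquel la preuve se réduit à l'argument décrit ci-dessus.
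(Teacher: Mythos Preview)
Your proof of item (1) is correct and coincides with the paper's.

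For item (2), your approach is again the same as the paper's, and you have in fact spotted a genuine issue that the paper's own argument overlooks. The paper takes an arbitrary divisor $d_1$ of $m_1-1$ not dividing $m_2-1$ and asserts that $\gcd(m_2-1,d_1)=1$, so that $T_{1,m_2}$ contains an element of order $d_1$ while $T_{1,m_1}$ does not. But this assertion is exactly what fails in your example $m_1=5$, $m_2=3$, $d_1=4$: there $\gcd(2,4)=2\neq 1$. Your further observation is also correct: the torsion spectra of $T_{1,3}$ and $T_{1,5}$ coincide (both equal the set of odd integers, since $\gcd(2,q)=1\iff\gcd(4,q)=1$), so no argument based solely on orders of torsion elements can separate them; and Théorème~\ref{thm:1} gives the same conjugacy-class count in both groups for each such $q$, so that finer invariant does not help either at $r=1$.

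Your proposed repair---reading the hypothesis as the existence of a \emph{prime} divisor $p$ of $m_1-1$ not dividing $m_2-1$---is the natural correction and makes the argument valid: then $\gcd(m_2-1,p)=1$ genuinely holds, while $\gcd(m_1-1,p)=p>1$. Under the literal hypothesis $m_1-1\nmid m_2-1$, neither your proof nor the paper's establishes the conclusion; settling that case would require a different invariant, which the paper does not provide.
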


\smallskip

\begin{proof}  \ 

(1) Puisque pour tout $q\in \mathbb N^{>1}$,  pgcd$(m-1,q)$ est un diviseur de $m-1$, le groupe $T_{m-1,m}$ contient des éléments de tout ordre. Réciproquement, si $0<r < m-1$, pgcd$(m-1,m-1)=m-1$ ne divise pas $r$ et il n'existe pas d'élément d'ordre $m-1$ dans $T_{r,m}$.

\smallskip

(2) Soit $d_1$ un diviseur de $m_1-1$ ne divisant pas $m_2-1$. D'une part,  pgcd$(m_1-1,d_1)=d_1$ ne divise pas $r=1$ et $T_{1, m_1}$ ne contient pas d'élément d'ordre $d_1$. 
D'autre part, pgcd$(m_2-1,d_2)=1$ divise $r=1$ et $T_{1, m_2}$ contient des éléments d'ordre $d_1$. Par conséquent, il n'existe pas de morphisme injectif  $T_{1, m_2} \rightarrow  T_{1, m_1}$.\end{proof}

{\bf Preuve du Corollaire \ref{coro :1}.}
Comme $T$ est simple, tout morphisme de $T$  dans $T_{r,m}$ est injectif ou trivial. D'après l'item (1) de la Proposition \ref{prop :5},  si $T$ s'injecte dans  $T_{r,m}$,  pour $0< r \leq m-1$, alors $r=m-1$. De plus,  $T_{m-1,m}$ contient $m-1$ classes de conjugaison d'éléments d'ordre $m-1$ et de nombre de rotation $\frac{1}{m-1}$ alors que $T$ n'en contient qu'une. Ces groupes ne sont isomorphes que lorsque $m-1=1$, correspondant au seul cas où tout rationnel  se réalise comme nombre de rotation d'une unique classe de conjugaison d'éléments d'ordre fini.
\bibliographystyle{alpha}
\bibliography{ordre_fini}
\end{document}